\newtheorem{theorem}{Theorem}
\newtheorem{lemma}[theorem]{Lemma}
\newtheorem{prop}[theorem]{Proposition}
\theoremstyle{definition}
\newtheorem{defn}[theorem]{Definition}
\newtheorem{example}[theorem]{Example}
\newcommand{\uu}{\mathbf{u}}
\newcommand{\xx}{\mathbf{x}}
\newcommand{\yy}{\mathbf{y}}
\renewcommand{\d}{\mathrm{d}}
\newcommand{\boxprod}{\,\Box\,}
\DeclareMathOperator{\diam}{diam}
\DeclareSymbolFont{mysymbol}{OMS}{cmsy}{m}{n}
\DeclareMathSymbol{\infty}{\mathord}{mysymbol}{49}
\title{Error-correcting codes from $k$-resolving sets}
\author{
Robert F.\ Bailey\footnote{School of Science \& Environment (Mathematics), Grenfell Campus, Memorial University of Newfoundland, Corner Brook, NL  A2H~6P9, Canada.  Email: \texttt{rbailey@grenfell.mun.ca}}
\and 
Ismael G.\ Yero\footnote{Departamento de Matem\'aticas, Escuela Polit\'ecnica Superior de Algeciras, Universidad de C\'adiz, Av.\ Ram\'on Puyol s/n, 11202 Algeciras, Spain.  Email: \texttt{ismael.gonzalez@uca.es}}
}
\begin{document}

\maketitle

\begin{abstract}
We demonstrate a construction of error-correcting codes from graphs by means of $k$-resolving sets, and present a decoding algorithm which makes use of covering designs.  Along the way, we determine the $k$-metric dimension of grid graphs (i.e.\ Cartesian products of paths).\\[1ex]

\noindent {\bf MSC 2010:} 05C12, 94B25 (primary); 05B40, 94B35 (secondary).\\[1ex]

\noindent {\bf Keywords:} error-correcting code; $k$-resolving set; $k$-metric dimension; covering design; uncovering; grid graph.
\end{abstract}

\section{Introduction}

\subsection{Error-correcting codes}
Error-correcting codes are applied to the accurate transmission and storage of data.  When information is received by a target, or read from a storage medium, errors may be introduced---for example, due to signal noise, or the medium being damaged---so to alleviate this problem, redundancy is introduced in order that the intended message can still be understood.  For an introduction to coding theory, see~\cite{Pless}.

Formally, an {\em error-correcting code} (or simply a {\em code}) is a collection $\mathcal{C}$ of vectors, called {\em codewords}, of given length $\ell$ over a fixed alphabet.  The {\em Hamming distance} between two codewords $\xx=(x_1,\ldots,x_\ell)$, $\yy=(y_1,\ldots,y_\ell)$ is the number of positions where they differ, i.e.\ $|\{i\, : \, x_i\neq y_i \}|$.  The {\em minimum distance} of $\mathcal{C}$ is the least Hamming distance between any two distinct codewords; if the minimum distance is $D$, then the {\em correction capability} of $\mathcal{C}$ is $r=\left\lfloor (D-1)/2 \right\rfloor$.   Suppose that a codeword $\xx$ is transmitted via a noisy channel which causes errors to appear, i.e.\ some symbols are replaced with others.  If there are $r$ errors or fewer, the received word has a unique nearest neighbour in $\mathcal{C}$, which is necessarily the transmitted word $\xx$.  For this to be useful in practice, an efficient decoding algorithm is needed to determine the nearest neighbour.

Traditionally, the most familiar error-correcting codes are {\em linear codes} (i.e.\ subspaces of vector spaces over finite fields)~\cite{Pless}, where the alphabet size is small (such as binary codes, which have an alphabet of size~2).  Other classes of codes include {\em permutation codes}~\cite{Cameron2010}, where each codeword is a permutation of $n$ symbols, so the length and alphabet size are both equal to $n$; codes with larger alphabet sizes have been the subject of more recent attention, in part because of applications such as powerline communications~\cite{Chu} and flash memory devices~\cite{TamoSchwartz10}.

\subsection{$k$-resolving sets}
We consider finite, simple, connected, undirected graphs.  The {\em distance} between two vertices $u$ and $v$ of a graph $G$ is the length of a shortest path between $u$ and $v$, and we denote this by $\d_G(u,v)$.  In recent years, much attention has been paid to the {\em metric dimension} of graphs: this is the smallest size of a subset of vertices (called a {\em resolving set}) with the property that the list of distances from any vertex to those in the set uniquely identifies that vertex, and is denoted by $\dim(G)$.

These concepts were introduced to graph theory in the 1970s by Harary and Melter~\cite{Harary76} and, independently, Slater~\cite{Slater75}; however, in the context of arbitrary metric spaces, the concept dates back at least as far as the 1950s~\cite{Blumenthal53}.  Various applications have been suggested for resolving sets and metric dimension of graphs, including combinatorial optimization \cite{SeboTannier04}, pharmaceutical chemistry \cite{Chartrand00}, robot navigation \cite{Khuller96} and sonar \cite{Slater75}.  For more information, see~\cite{bsmd,Chartrand00}.

The following definition is a natural generalization of the notion of resolving sets.

\begin{defn}
Let $G=(V,E)$ be a graph.  An ordered set of vertices $(v_1,\ldots,v_{\ell})$ is a {\em $k$-resolving set} for $G$ if, for any distinct vertices $u,w\in V$, the lists of distances $(\d_G(u,v_1),\ldots,\d_G(u,v_{\ell}))$ and $(\d_G(w,v_1),\ldots,\d_G(w,v_{\ell}))$ differ in at least $k$ positions.
\end{defn}

In the case $k=1$, we have the usual notion of a resolving set for $G$.  For $k>2$ it is not necessarily the case that an arbitrary graph $G$ has a $k$-resolving set; for example, a complete graph $K_n$ with $n\geq 3$ has a $2$-resolving set but not a $3$-resolving set.  If $G$ has a $k$-resolving set, we denote the least size of a $k$-resolving set by $\dim_k(G)$, the {\em $k$-metric dimension} of $G$.  A $k$-resolving set of size $\dim_k(G)$ is called a {\em $k$-metric basis} for $G$.  If $k$ is the largest integer for which $G$ has a $k$-resolving set, then we say that $G$ is a {\em $k$-metric dimensional} graph.

The notion of $k$-resolving sets in graphs was introduced in~\cite{yero1} and further studied in~\cite{yero3,yero4,yero2}; it was then extended to more general metric spaces in~\cite{BeardonRV}.

\section{Codes from $k$-resolving sets}
The two themes of this paper are tied together by the following definition.

\begin{defn} \label{defn:kRS}
Let $G$ be a graph with $n$ vertices and diameter $d$, and let $S=\{v_1,v_2,\ldots,v_{\ell}\}$ be a $k$-resolving set for $G$ of size $\ell$.  Then the set
\[ \mathcal{C}(G,S) = \{ (\d_G(u,v_1),\d_G(u,v_2),\ldots,\d_G(u,v_{\ell})) \, : \, u\in V \} \]
is called a {\em $(G,k)$-code}.
\end{defn}
It follows from the definition that $\mathcal{C}(G,S)$ is an error-correcting code of length $\ell$, size $n$ and minimum Hamming distance at least $k$, over the alphabet $\{0,\ldots,d\}$, which can correct $r=\lfloor (k-1)/2 \rfloor$ errors.  In order for $r$ to be non-zero, we require that $k\geq 3$ (otherwise the code has no practical purpose).

For $\mathcal{C}(G,S)$ to be used for error correction, we need a decoding algorithm.  Let $G_j(u)$ denote the subset of vertices of $G$ at distance $j$ from $u$.
Now suppose that $u\in V$ and $\uu=(\d_G(u,v_1),\d_G(u,v_2),\ldots,\d_G(u,v_{\ell})) \in \mathcal{C}(G,S)$; suppose that we transmit $\uu$ and receive the word $\xx=(x_1,x_2,\ldots,x_{\ell})$, which is assumed to have at most $r$ errors.

\begin{lemma} \label{lemma:uniqueness}
Let $u$, $\uu$ and $\xx$ be as above, and suppose that $I$ is an $(\ell-r)$-subset of $\{1,\ldots,\ell\}$.
\begin{itemize}
\item[(i)] If the received word $\xx$ contains no errors in the positions indexed by $I$, then
\[ \bigcap_{i\in I} G_{x_i}(v_i) = \{u\}. \]
\item[(ii)] If the received word $\xx$ does contain an error in a position in $I$, then
\[ \bigcap_{i\in I} G_{x_i}(v_i) = \varnothing. \]
\end{itemize}
\end{lemma}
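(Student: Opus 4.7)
The plan is to use the defining property of a $k$-resolving set, namely that any two distinct vertices have distance vectors differing in at least $k$ positions, in conjunction with the inequality $k \geq 2r+1$ coming from $r = \lfloor (k-1)/2 \rfloor$.

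For part (i), assume no errors occur among the positions in $I$, so that $x_i = \d_G(u,v_i)$ for every $i \in I$. Then $u$ itself satisfies $\d_G(u,v_i) = x_i$, hence $u \in G_{x_i}(v_i)$ for each $i \in I$, so $u$ lies in the intersection. For uniqueness, I would suppose that some $w \neq u$ also lies in $\bigcap_{i\in I} G_{x_i}(v_i)$. Then $\d_G(w,v_i) = x_i = \d_G(u,v_i)$ on every index $i \in I$, so the distance vectors of $u$ and $w$ agree on at least $|I| = \ell - r$ coordinates, and thus differ in at most $r$ positions. But the $k$-resolving property forces them to differ in at least $k \geq 2r+1 > r$ positions, a contradiction.

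For part (ii), suppose there is some $j \in I$ at which $\xx$ contains an error, i.e.\ $x_j \neq \d_G(u,v_j)$. Any $w \in \bigcap_{i\in I} G_{x_i}(v_i)$ must satisfy $\d_G(w,v_j) = x_j \neq \d_G(u,v_j)$, so in particular $w \neq u$. The key observation is that inside $I$, the vertex $w$ agrees with $u$ precisely on those coordinates of $I$ where no error occurred. Writing $e_I$ for the number of error positions contained in $I$, this gives at least $|I| - e_I = \ell - r - e_I$ coordinates on which the distance vectors of $u$ and $w$ agree.

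The main (but only mild) obstacle is turning this count into the required contradiction. Since $u \neq w$ and $S$ is $k$-resolving, their distance vectors agree on at most $\ell - k$ positions, so $\ell - r - e_I \leq \ell - k$, giving $e_I \geq k - r \geq r+1$. But the total number of errors in $\xx$ is at most $r$, and $e_I$ is bounded above by this total, yielding $e_I \leq r$, which contradicts $e_I \geq r+1$. Therefore no such $w$ exists, and the intersection is empty.
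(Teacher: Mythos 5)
Your proof is correct and follows essentially the same route as the paper's, which simply asserts uniqueness (resp.\ emptiness) of the intersection without spelling out the underlying count; your explicit use of $k \geq 2r+1$ together with the bound of $r$ on the total number of errors is exactly the justification the paper leaves implicit. No issues.
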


\begin{proof} If there are no errors in the positions indexed by $I$, then both $\xx$ and $\uu$ contain the same entries in those positions, and thus $u$ is the unique vertex at those distances from the corresponding entries in the $k$-resolving set.  If, however, there are errors in those positions, then no such vertex can exist, and thus the intersection is empty.
\end{proof}

The goal, therefore, when decoding a $(G,k)$-code is to find (as quickly as possible) an $(\ell-r)$-subset of positions for which the intersection $\displaystyle{\bigcap_{i\in I} G_{x_i}(v_i)}$ is non-empty.  Successively enumerating the $(\ell-r)$-subsets of $\{1,\ldots,\ell\}$ will achieve this, but will be slow in practice.  However, if we assume that there are at most $r'<r$ errors (say $r'=2$ or $r'=3$), we can make use of the following idea.

\begin{defn} \label{defn:uncovering}
Let $\nu$, $\kappa$, $\tau$ be integers such that $\nu\geq \kappa \geq \tau \geq 0$.  A {\em $(\nu,\nu-\kappa,\tau)$-uncovering} is a collection $\mathcal{U}$ of $(\nu-\kappa)$-subsets of $\{1,\ldots,\nu\}$ with the property that any $\tau$-subset of $\{1,\ldots,\nu\}$ is disjoint from at least one member of $\mathcal{U}$.
\end{defn}

If we take the complements of each $(\nu-\kappa)$-subset in $\mathcal{U}$, we obtain a {\em $(\nu,\kappa,\tau)$-covering design}, which are much more widespread in the literature: see the survey by Mills and Mullin~\cite{MillsMullin92} for details of these.  Uncoverings were introduced by the first author in~\cite{btubb,ecpg} where they were applied to decoding permutation codes; the same concept was also devised under the name {\em antiblocking system} by Kroll and Vincenti~\cite{Kroll08} for a decoding algorithm for linear codes.  A further application to network reliability was given in~\cite{ubbnet}.

The best known bound on the minimum size of coverings (and thus uncoverings also) is known as the {\em Sch\"onheim bound}, proved in~\cite{Schonheim}.  It states that for given $\nu$, $\kappa$ and $\tau$, the least size of a $(\nu,\kappa,\tau)$-covering design is
\[ L(\nu,\kappa,\tau) = \left\lceil \frac{\nu}{\kappa} \left\lceil \frac{\nu-1}{\kappa-1} \left\lceil \cdots \left\lceil \frac{\nu-\tau+1}{\kappa-\tau+1} \right\rceil \cdots \right\rceil \right\rceil \right\rceil. \]
Covering designs meeting this bound are known (or known asymptotically) in many cases: see~\cite{handbook,MillsMullin92} for tables of results.  The database of best-known covering designs~\cite{lajolla} is useful for finding uncoverings with small parameters.

\begin{example} \label{example:cov-unc}
The following is an $(8,5,2)$-covering design:
\[
\begin{array}{ccccc}
4 & 5 & 6 & 7 & 8 \\
1 & 2 & 3 & 7 & 8 \\
1 & 4 & 5 & 6 & 8 \\
2 & 3 & 4 & 5 & 6
\end{array}
\]
By taking the complements of each block, we obtain an $(8,3,2)$-uncovering:
\[
\begin{array}{ccc}
1 & 2 & 3 \\
4 & 5 & 6 \\
2 & 3 & 7 \\
1 & 7 & 8
\end{array}
\]
It can easily be seen that any pair chosen from $\{1,\ldots,8\}$ is disjoint from at least one row.
\end{example}

\subsection{A decoding algorithm}

Suppose we have a $(G,k)$-code of length $\ell$, and we wish to correct $r'$ errors.  If $\mathcal{U}$ is an $(\ell, \ell-r, r')$-uncovering, we proceed as follows: for a received word $\xx$, we consider each $I\in\mathcal{U}$ and obtain $\displaystyle{\bigcap_{i\in I} G_{x_i}(v_i)}$.  By Lemma~\ref{lemma:uniqueness}, this intersection will either be empty or contain the vertex $u$ corresponding to the transmitted word $\uu$.  If $\mathbf{u}$ contains at most $r'$ errors, by the definition of uncovering we know that there exists an $I\in\mathcal{U}$ disjoint from the error positions; consequently, we are guaranteed to be able to find the transmitted word.
To compute $\displaystyle{\bigcap_{i\in I} G_{x_i}(v_i)}$, consider the matrix $M$ whose rows are indexed by $V$ and whose columns are indexed by $S$, and where the entries are $M_{uv}=\d_G(u,v)$  (so the rows of $M$ are precisely the codewords).  For a given $I\subseteq S$, examine the rows of the submatrix to find a row which agrees with $\xx$ in those positions; if such a row exists, by Lemma~\ref{lemma:uniqueness} it must be unique and correspond to the vertex $u$.

\subsection{Complexity}
The matrix $M$ is a submatrix of the distance matrix of $G$, which can be computed in $O(|V|^3)$ time (for instance, by the Floyd--Warshall algorithm; see~\cite[{\S}25.2]{intro-alg}); however, this need only be done once, prior to the implementation of the code.  For a given instance of the decoding problem, where the input is a received word $\xx$ and the output the transmitted word $\uu$, the matrix $M$ must be examined at most $|\mathcal{U}|$ times, and at most $|I|\cdot|V|$ steps are required each time.  Thus the overall complexity of the decoding algorithm is $O(|\mathcal{U}|\cdot|I|\cdot|V|)$.

\section{Some covering designs}

Consider a $(\nu,\kappa,\tau)$-covering design where $\nu$ is given by a linear function in $\kappa$.  Then for fixed $\tau$ there exists a threshold value $\kappa_0$ such that, beyond this threshold, the Sch\"onheim bound $L(\nu,\kappa,\tau)$ remains constant.
For example, if $\nu=2\kappa+3$ we see that for all $\kappa\geq 9$, we have $L(2\kappa+3,\kappa,2)=7$, while for all $\kappa\geq 21$, we have $L(2\kappa+3,\kappa,3)=15$.
Unfortunately, examples of covering designs which actually achieve this lower bound are rare.  However, while it would be desirable to have optimal coverings, from the perspective of complexity a family of coverings of constant size (for a given value of $\tau$) is an acceptable solution.  So the following construction (suggested by F.~Petrov\footnote{Personal communication via \texttt{mathoverflow.net}, February 2016.}) is very useful.

\begin{prop} \label{prop:petrov}
Let $\nu=a\kappa+b$ (where $a\neq 0$ and $b$ are fixed constants), and let $\tau$ be a fixed constant.  Then, provided $\kappa$ is sufficiently large, there is a $(\nu,\kappa,\tau)$-covering design of size bounded by a constant dependent only on $a$ and $\tau$.
\end{prop}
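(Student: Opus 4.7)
The plan is to give an explicit construction: partition $\{1,\ldots,\nu\}$ into a bounded number of small parts and use unions of these parts as the blocks of the covering. Because any $\tau$-subset of $\{1,\ldots,\nu\}$ meets at most $\tau$ parts, such a union will cover every $\tau$-subset provided the sizes and number of parts are balanced correctly.

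More precisely, I would take $N=a\tau+1$. The first step is to verify that once $\kappa$ is sufficiently large (in terms of $a$, $b$, and $\tau$) the inequality $N\lfloor\kappa/\tau\rfloor\geq a\kappa+b=\nu$ holds; this reduces to a linear bound on $\kappa$ (roughly $\kappa\gtrsim\tau(a\tau+b)$) and is immediate. It allows me to partition $\{1,\ldots,\nu\}$ into parts $P_1,\ldots,P_N$ with each $|P_i|\leq\lfloor\kappa/\tau\rfloor$.

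The second step is to build the blocks. For each $\tau$-subset $J\subseteq\{1,\ldots,N\}$, set $\tilde B_J=\bigcup_{j\in J}P_j$, so $|\tilde B_J|\leq\tau\lfloor\kappa/\tau\rfloor\leq\kappa$. If $|\tilde B_J|<\kappa$, pad with arbitrary elements from $\{1,\ldots,\nu\}\setminus\tilde B_J$ (available because $\nu\geq\kappa$ whenever $a\geq1$, while the case $a<0$ is vacuous for large $\kappa$) to obtain a block $B_J$ of size exactly $\kappa$. The family $\mathcal{B}=\{B_J\}$ then has $\binom{N}{\tau}=\binom{a\tau+1}{\tau}$ blocks, a number depending only on $a$ and $\tau$.

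The final step is the covering property: any $\tau$-subset $T\subseteq\{1,\ldots,\nu\}$ meets at most $\tau$ parts, so $\{j:T\cap P_j\neq\varnothing\}$ has size at most $\tau$ and extends to some $\tau$-subset $J\subseteq\{1,\ldots,N\}$; then $T\subseteq\tilde B_J\subseteq B_J$. There is no substantive obstacle; the only subtlety I would watch for is to ensure that $N$ (and hence $\binom{N}{\tau}$) does not depend on $b$ once $\kappa$ is large enough, which is precisely what makes the bound constant in $a$ and $\tau$ alone as the proposition requires.
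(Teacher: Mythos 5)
Your construction is essentially the same as the paper's: partition the point set into roughly $a\tau+1$ parts of size about $\lfloor\kappa/\tau\rfloor$, take the union of every $\tau$ of the parts, and pad each union to a $\kappa$-set, giving $\binom{a\tau+1}{\tau}$ blocks. The only difference is presentational (you fix the number of parts at $a\tau+1$ and verify feasibility for large $\kappa$, whereas the paper fixes the part size $m=\lfloor\kappa/\tau\rfloor$ and computes that the number of parts is $a\tau$ or $a\tau+1$), and your argument is correct.
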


\begin{proof} Let $m=\lfloor \kappa/\tau \rfloor$ and $s=\lceil \nu/m \rceil$.  Form a partition $\Pi$ of the set of $\nu=a\kappa+b$ points into $s$ subsets, including as many as possible of size $m$ and (unless $\kappa$ divides $\tau$) one of smaller size.  By the division algorithm, $\kappa=m\tau+\rho$, where $0 \leq \rho \leq \tau-1$.  Then we have
\[ s= \left\lceil \frac{\nu}{m} \right\rceil = \left\lceil \frac{a\kappa+b}{m} \right\rceil = \left\lceil \frac{a(m\tau+\rho)+b}{m} \right\rceil = a\tau + \left\lceil \frac{a\rho+b}{m} \right\rceil = a\tau + \left\lceil \frac{\tau(a\rho+b)}{\kappa-\rho} \right\rceil . \]
The quantity $\displaystyle \frac{\tau(a\rho+b)}{\kappa-\rho}$ is zero if and only if both $b=0$ and $\rho=0$; furthermore, since $a$, $b$ and $\tau$ are constants and $\rho<\tau$, we have that\
\[ \lim_{k\to\infty} \frac{\tau(a\rho+b)}{\kappa-\rho} = 0. \]
Thus, by the definition of a limit, there exists some $\kappa_1$ such that for all $\kappa\geq\kappa_1$, $\displaystyle \frac{\tau(a\rho+b)}{\kappa-\rho} \leq 1$, and therefore $s\in \{ a\tau, a\tau+1 \}$.

We form a covering design as follows: for any combination of $\tau$ of the sets in $\Pi$, take any $\kappa$-subset of points which contains their union.  Then this collection of $\kappa$-subsets forms the blocks of an $(a\kappa+b,\kappa,\tau)$-covering design: the number of blocks is at most $\binom{a\tau+1}{\tau}$; this depends only on the constants $a$ and $\tau$.
\end{proof}

\begin{example} \label{example:petrov}
We will use Proposition~\ref{prop:petrov} to construct a $(23,10,2)$-covering design.  The Sch\"onheim bound gives $L(23,10,2)=7$, while the best-known covering has size $8$ (see~\cite{lajolla}); we can obtain a covering of size 10 as follows.  First, we partition the set $\{1,\ldots,23\}$ into $\lceil 23/ (10/2) \rceil =5$ subsets, four of which have size $10/2=5$ and the remaining set has size $3$:
\[ 1\ 2\ 3\ 4\ 5 \mid 6\ 7\ 8\ 9\ 10 \mid 11\ 12\ 13\ 14\ 15 \mid 16\ 17\ 18\ 19\ 20 \mid 21\ 22\ 23\ \]
Then we form the blocks of our covering by taking the unions of any pair of these subsets, adding extra points arbitrarily if required.  The blocks are the rows of the array below (where $\ast$ indicates symbols which can be replaced arbitrarily):
\[ \begin{array}{cccccccccc}
 1 &  2 &  3 &  4 &  5 &  6 &  7 &  8 &  9 & 10 \\
 1 &  2 &  3 &  4 &  5 & 11 & 12 & 13 & 14 & 15 \\
 1 &  2 &  3 &  4 &  5 & 16 & 17 & 18 & 19 & 20 \\
 1 &  2 &  3 &  4 &  5 & 21 & 22 & 23 & \ast & \ast \\
 6 &  7 &  8 &  9 & 10 & 11 & 12 & 13 & 14 & 15 \\
 6 &  7 &  8 &  9 & 10 & 16 & 17 & 18 & 19 & 20 \\
 6 &  7 &  8 &  9 & 10 & 21 & 22 & 23 & \ast & \ast \\
11 & 12 & 13 & 14 & 15 & 16 & 17 & 18 & 19 & 20 \\
11 & 12 & 13 & 14 & 15 & 21 & 22 & 23 & \ast & \ast \\
16 & 17 & 18 & 19 & 20 & 21 & 22 & 23 & \ast & \ast
\end{array}\]
In fact, for any sufficiently large value of $\kappa$, Proposition~\ref{prop:petrov} will yield a $(2\kappa+3,\kappa,2)$-covering with $\binom{5}{2}=10$ blocks.
\end{example}

\section{Codes from paths and cycles}
In this section, we show how two straightforward classes of graphs---namely paths and cycles---may be used to obtain families of $(G,k)$-codes to which our decoding algorithm can be applied.  While the parameters for codes obtained from paths and cycles are very similar, the key distinction is that the alphabet size is smaller for codes from cycles than for codes from paths, on account of the diameter of a cycle $C_n$ being (approximately) half that of a path $P_n$.

\subsection{Paths}
Let $P_n$ denote a path on $n$ vertices, which has a $k$-resolving set for $k\leq n-1$, and for $k\geq 3$ has $\dim_k(P_n)=k+1$; see~\cite{yero1} for details.  (We remark that $\dim_k(P_n)=k$ for $k=1$ and $k=2$, but this is of no interest from the perspective of error-correction.)  A $(P_n,k)$-code will therefore have $n$ codewords over an alphabet of size $\diam(P_n)=n-1$, of length $\ell=\dim_k(P_n)=k+1$, and with minimum distance at least $k$, so can correct $r=\lfloor (k-1)/2 \rfloor$ errors.

We note that in the extreme case where $k=n-1$, the $k$-metric dimension is $k+1=n$, and thus every vertex is required in a $k$-resolving set.  This also means that the alphabet size, length of the codewords and number of codewords are all equal, making these codes comparable to Latin squares as permutation codes (but with minimum distance one less).
\begin{example}
Consider a path $P_5$ on $5$ vertices, and let $k=4$.  All $5$ vertices are needed in a $4$-resolving set, and the code obtained is as follows:
\[ \begin{array}{c|ccccc}
\textnormal{Vertex} & \multicolumn{5}{|c}{\textnormal{Codeword}} \\ \hline
1 & 0 & 1 & 2 & 3 & 4 \\
2 & 1 & 0 & 1 & 2 & 3 \\
3 & 2 & 1 & 0 & 1 & 2 \\
4 & 3 & 2 & 1 & 0 & 1 \\
5 & 4 & 3 & 2 & 1 & 0
\end{array} \]
This code has $5$ codewords over the alphabet $\{0,1,2,3,4\}$, and has minimum distance $4$, so can correct $\lfloor (4-1)/2 \rfloor =1$ error.
\end{example}

In order to decode a $(P_n,k)$-code, we will require an uncovering with parameters 
$(k+1, \ell-\lfloor (k-1)/2 \rfloor, r')$, or equivalently a $(k+1, \lfloor (k-1)/2 \rfloor, r')$-covering design.  By letting $m=\lfloor (k-1)/2 \rfloor$, this is either a $(2m+2,m,r')$-covering when $k$ is odd, or a $(2m+3,m,r')$-covering when $k$ is even.  In either case, for a given value of $r'$, we can obtain appropriate covering designs from Proposition~\ref{prop:petrov}.  For example, for any path $P_n$ with $n\geq 23$ may use the uncovering arising from Example~\ref{example:petrov} to correct two errors.

\subsection{Cycles}
For a cycle $C_n$ on $n$ vertices, we must consider the cases where $n$ is odd or even separately; details of $k$-resolvability of cycles were given in~\cite{BeardonRV}.

\subsubsection{Odd cycles}
When $n$ is odd, $C_n$ has a $k$-resolving set for $k\leq n-1$ with $\dim_k(C_n)=k+1$ for all $k$.  A $(C_n,k)$-code will therefore have $n$ codewords of length $\ell=\dim_k(C_n)=k+1$, and with minimum distance $k$, so can correct $r=\lfloor (k-1)/2 \rfloor$ errors; all of these parameters are identical to those for a path $P_n$, but this time the codewords are over an alphabet of size $\diam(C_n)=(n-1)/2$.  As the alphabet size does not affect the uncovering needed for decoding, any uncovering for a $(P_n,k)$-code may also be used for a $(C_n,k)$-code when $n$ is odd.

\begin{example}
Consider a cycle $C_5$ on $5$ vertices (labelled $0,\ldots,4$), and let $k=4$.  All $5$ vertices are needed in a $4$-resolving set, and the code obtained is as follows:
\[ \begin{array}{c|ccccc}
\textnormal{Vertex} & \multicolumn{5}{|c}{\textnormal{Codeword}} \\ \hline
0 & 0 & 1 & 2 & 2 & 1 \\
1 & 1 & 0 & 1 & 2 & 2 \\
2 & 2 & 1 & 0 & 1 & 2 \\
3 & 2 & 2 & 1 & 0 & 1 \\
4 & 1 & 2 & 2 & 1 & 0
\end{array} \]
This has $5$ codewords over the alphabet $\{0,1,2\}$, and has minimum distance $4$, so can therefore correct $\lfloor (4-1)/2 \rfloor =1$ error.
\end{example}
We note that, because the underlying graph is a cycle, any codeword may be obtained from another by a cyclic permutation.

\subsubsection{Even cycles}
When $n$ is even, a little more care is required.  Letting $n=2q$, we have that $C_n$ has a $k$-resolving set for $k\leq n-2=2q-2$, and $\dim_k(C_n)=k+1$ for $k\leq q-1$, or $\dim_k(C_n)=k+2$ for $q\leq k \leq 2q-2=n-2$.  Thus for $k\leq q-1$, the parameters of a $(C_n,k)$-code when $n$ is even (as well as those of the uncovering needed for decoding) are the same as those from an odd cycle (although with an alphabet of size $q=n/2$).  For $q\leq k \leq 2q-2$,  they are slightly different: we have $n$ codewords of length $k+2$ and minimum distance $k$.  For decoding, we will need a
$(k+2, k+2-\lfloor (k-1)/2 \rfloor, r')$-uncovering, or equivalently a $(k+2,\lfloor (k-1)/2 \rfloor, r')$-covering design.  Letting $m=\lfloor (k-1)/2 \rfloor$, we require either a $(2m+3,m,r')$-covering or a $(2m+4,m,r')$-covering if $k$ is odd or even respectively; Proposition~\ref{prop:petrov} is applicable in either case.

\begin{example}
Consider a cycle $C_6$ on $6$ vertices (labelled $0,\ldots,5$), and let $k=4$.  All $6$ vertices are needed in a $4$-resolving set, and the code obtained is as follows:
\[ \begin{array}{c|cccccc}
\textnormal{Vertex} & \multicolumn{6}{|c}{\textnormal{Codeword}} \\ \hline
0 & 0 & 1 & 2 & 3 & 2 & 1 \\
1 & 1 & 0 & 1 & 2 & 3 & 2 \\
2 & 2 & 1 & 0 & 1 & 2 & 3 \\
3 & 3 & 2 & 1 & 0 & 1 & 2 \\
4 & 2 & 3 & 2 & 1 & 0 & 1 \\
5 & 1 & 2 & 3 & 2 & 1 & 0
\end{array} \]
This code has $6$ codewords over the alphabet $\{0,1,2,3\}$, and has minimum distance $4$, so can correct $\lfloor (4-1)/2 \rfloor =1$ error.
\end{example}

\section{Codes from grid graphs}

In this section, we will use the family of grid graphs $P_s\boxprod P_t$, i.e.\ the Cartesian product of the paths $P_s$ and $P_t$, to obtain $(P_s\boxprod P_t,k)$-codes. As we will show, for any grid graph $P_s\boxprod P_t$ of order $st$ and any $k\in \{1,\ldots,s+t-2\}$, we have that $\dim_k(P_s\boxprod P_t)=2k$.  This goes on to provide an interesting infinite family of examples.

\subsection{The $k$-metric dimension of grid graphs}
Suppose that $G$ is the grid graph $P_s\boxprod P_t$, that $s\ge t$, and $U=\{u_1,u_2,\ldots,u_s\}$ and $V=\{v_1,v_2,\ldots,v_t\}$ are the vertex sets of $P_s$ and $P_t$, respectively.

It is clear that, if $G$ is a $k$-metric dimensional graph, then for every positive integer $k'\le k$, $G$ also has a $k'$-metric basis. Next we present a characterization of $k$-metric dimensional graphs, obtained in \cite{yero1}, which will be useful in our work. To do so, we need some additional terminology. Given two vertices $x,y\in V(G)$, we say that the set of {\em distinctive vertices} of $x,y$ is $${\cal D}(x,y)=\{z\in V(G): d_{G}(x,z)\ne d_{G}(y,z)\}.$$

\begin{theorem}[Estrada-Moreno {\em et al.}~\cite{yero1}] \label{theokmetric}
A connected graph $G$ is $k$-metric dimensional if and only if $k=\displaystyle\min_{x,y\in V(G)}\vert {\cal D}(x,y)\vert .$
\end{theorem}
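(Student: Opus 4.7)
The plan is to reformulate the definition of a $k$-resolving set in set-theoretic terms: a set $S\subseteq V(G)$ is a $k$-resolving set if and only if $|S\cap \mathcal{D}(x,y)|\ge k$ for every pair of distinct vertices $x,y$. This translation between "the distance lists differ in at least $k$ positions" and "at least $k$ vertices of $S$ distinguish the pair" is really the conceptual heart of the argument, and once it is in hand the theorem becomes almost immediate. I would record this equivalence as the first step, since it will be used in both directions.

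Next I would prove the upper bound. Suppose $G$ admits some $k$-resolving set $S$. Then for any distinct $x,y\in V(G)$,
\[ |\mathcal{D}(x,y)|\;\ge\;|S\cap \mathcal{D}(x,y)|\;\ge\;k, \]
so $\min_{x,y}|\mathcal{D}(x,y)|\ge k$. Applying this to the largest $k$ for which a $k$-resolving set exists (which is well-defined because $G$ is $k$-metric dimensional) yields $k\le \min_{x,y}|\mathcal{D}(x,y)|$.

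For the matching lower bound I would simply exhibit a concrete resolving set. Set $D^\ast=\min_{x,y}|\mathcal{D}(x,y)|$ and take $S=V(G)$. Then $S\cap \mathcal{D}(x,y)=\mathcal{D}(x,y)$, which has size at least $D^\ast$ for every pair $x,y$. By the equivalence from the first step, $V(G)$ is a $D^\ast$-resolving set, and so the largest $k$ for which $G$ has a $k$-resolving set is at least $D^\ast$. Combining the two bounds gives $k=D^\ast$, as required.

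There is no real obstacle here: the only step that requires any thought is writing down the set-theoretic reformulation of the $k$-resolving property, and after that both inequalities follow from choosing the extremal pair on one side and the set $S=V(G)$ on the other. The argument does not depend on any structural property of $G$ beyond connectedness (which only ensures that distances are finite), which is what makes this a clean characterization applicable to arbitrary connected graphs.
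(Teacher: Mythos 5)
Your proof is correct and complete. Note that the paper does not actually prove this theorem---it is imported from \cite{yero1} as a known result---so there is no in-paper argument to compare against; however, your key step (that $S$ is a $k$-resolving set if and only if $|S\cap\mathcal{D}(x,y)|\ge k$ for all distinct $x,y$) is precisely the observation the authors record just before using the theorem, and your two bounds (the extremal pair for $k\le\min_{x,y}|\mathcal{D}(x,y)|$, and $S=V(G)$ for the reverse inequality) constitute the standard argument given in the cited source.
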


To compute the $k$-metric dimension of a grid graph, we need first to determine for which values of $k$ there exists a $k$-metric basis. This is answered by our next result.

\begin{theorem}
For any $s,t\ge 2$, the graph $G=P_s\boxprod P_t$ is $(s+t-2)$-metric dimensional.
\end{theorem}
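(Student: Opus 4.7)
The strategy is to apply Theorem~\ref{theokmetric}, reducing the claim to showing that $\min_{x,y\in V(G)}|\mathcal{D}(x,y)| = s+t-2$. Identify each vertex of $G$ with a pair $(i,j)\in\{1,\ldots,s\}\times\{1,\ldots,t\}$, so that $d_G((i_1,j_1),(i_2,j_2))=|i_1-i_2|+|j_1-j_2|$.

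For the upper bound $\min|\mathcal{D}(x,y)|\leq s+t-2$, consider the specific pair $x=(1,2)$ and $y=(2,1)$. A direct case check shows that $(i,j)$ is equidistant from $x$ and $y$ if and only if either $(i,j)=(1,1)$ or both $i\geq 2$ and $j\geq 2$. Hence the equidistant set has size $(s-1)(t-1)+1$, giving $|\mathcal{D}(x,y)|=st-(s-1)(t-1)-1=s+t-2$.

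For the lower bound, take arbitrary distinct $x=(i_1,j_1),y=(i_2,j_2)$ and set $\alpha=|i_1-i_2|$, $\beta=|j_1-j_2|$. The vertex $(i,j)$ is equidistant from $x$ and $y$ precisely when $F(i)=G(j)$, where $F(i)=|i-i_1|-|i-i_2|$ and $G(j)=|j-j_2|-|j-j_1|$ are piecewise-linear. Their level-set sizes are easily described: the extreme values $\pm\alpha$ are attained by $\min(i_1,i_2)$ and $s-\max(i_1,i_2)+1$ indices respectively (with the analogue for $G$), while each intermediate value of either function is attained exactly once. The total equidistant count is $\sum_c N_F(c)N_G(c)$, which I split into cases: (A) $\alpha=0$ or $\beta=0$, where the count is bounded by $\max(s,t)$; (B) $\alpha,\beta\geq 1$ of opposite parity, giving zero; (C) $\alpha,\beta\geq 1$ of equal parity with $\alpha\neq\beta$, where the count collapses to $s$ or $t$; and (D) $\alpha=\beta\geq 1$. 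Cases (A)--(C) are easily seen to satisfy $|\mathcal{D}(x,y)|\geq s+t-2$ whenever $s,t\geq 2$.

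The main obstacle is case (D), where the equidistant count takes the form $i_1 j_2+(s-i_2+1)(t-j_1+1)+(\alpha-1)$ for an anti-diagonal pair (with a symmetric expression for main-diagonal pairs), depending on both the position and the common step $\alpha$. I handle it in two stages: first, for fixed $\alpha$ this expression is bilinear in $(i_1,j_2)$, so its maximum over the valid rectangle is attained at a corner, yielding $(s-\alpha)(t-\alpha)+\alpha$; second, the constraint $\alpha\leq\min(s,t)-1$ forces $s+t\geq 2\alpha+2$, and a short algebraic manipulation gives $(s-\alpha)(t-\alpha)+\alpha\leq(s-1)(t-1)+1$ with equality only at $\alpha=1$. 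Combining all cases yields $\min_{x,y}|\mathcal{D}(x,y)|\geq s+t-2$, matching the upper bound, so Theorem~\ref{theokmetric} completes the proof.
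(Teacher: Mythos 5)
Your proof is correct, and while it shares the paper's frame---both reduce the claim via Theorem~\ref{theokmetric} to showing $\min_{x,y}|\mathcal{D}(x,y)|=s+t-2$, and both use the pair $(u_1,v_2),(u_2,v_1)$ as the witness for the upper bound---your lower-bound argument takes a genuinely different route. The paper never computes $|\mathcal{D}(x,y)|$ exactly: splitting into the cases $i=g$, $j=h$, and $i\neq g,\ j\neq h$, it exhibits explicit row and column segments that lie almost entirely inside $\mathcal{D}(x,y)$ and counts them by inclusion--exclusion. You instead count the \emph{complement} exactly: the equidistance condition becomes $F(i)=G(j)$ for the piecewise-linear functions $F(i)=|i-i_1|-|i-i_2|$ and $G(j)=|j-j_2|-|j-j_1|$, so the equidistant count is $\sum_c N_F(c)N_G(c)$, which you analyse by the parities and relative sizes of $\alpha$ and $\beta$, reducing everything to the single nontrivial case $\alpha=\beta\ge 1$. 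I verified the steps you compress: the corner values of $i_1j_2+(S-i_1)(T-j_2)$ with $S=s-\alpha+1$, $T=t-\alpha+1$ are $(S-1)(T-1)+1$ and $S+T-2$, the former dominating since $S,T\ge 2$; and $f(\alpha)=(s-\alpha)(t-\alpha)+\alpha$ is non-increasing on the admissible range because $f(\alpha)-f(\alpha+1)=s+t-2\alpha-2\ge 0$ when $\alpha\le\min(s,t)-1$; likewise cases (A)--(C) come down to $st-\max(s,t)\ge s+t-2$, i.e.\ $(s-1)(t-2)\ge 0$ for $s\ge t$. Your approach is longer and more computational, but it buys the exact size of the equidistant set for every pair of vertices (hence a description of all extremal pairs), whereas the paper's argument is shorter, avoids the parity and corner analysis, and settles for lower bounds. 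For a final write-up you should spell out the level-set sizes and the one-line inequalities currently labelled ``easily seen,'' but there is no gap.
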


\begin{proof}
First, we consider the vertices $(u_2,v_1)$ and $(u_1,v_2)$. Notice that
$${\cal D}_G((u_2,v_1),(u_1,v_2))= (U\times \{v_1\})\cup (\{u_1\}\times V)-\{(u_1,v_1)\}.$$
Thus, $|{\cal D}_G((u_2,v_1),(u_1,v_2))|=s+t-2$ and $G$ is $k$-metric dimensional for some $k\le s+t-2$.

On the other hand, let $(u_i,v_j)$ and $(u_g,v_h)$ be two distinct vertices of $G$. We consider the following cases.\\

\noindent Case 1: $i=g$. Hence $j\ne h$ and it follows that $U\times\{v_j,v_h\}\subseteq {\cal D}_G((u_i,v_j),(u_g,v_h))$. Also,
$$|(\{u_i\}\times V)\cap {\cal D}_G((u_i,v_j),(u_g,v_h))|\ge t-1.$$
Thus we have
\begin{align*}
|{\cal D}_G((u_i,v_j),(u_g,v_h))|&\ge |U\times\{v_j,v_h\}|+|(\{u_i\}\times V)\cap {\cal D}_G((u_i,v_j),(u_g,v_h))|-2\\
&\ge 2s+t-3\\
&\ge s+t-1.
\end{align*}

\noindent Case 2: $j=h$. Analogous to Case 1 above, we obtain that $|{\cal D}_G((u_i,v_j),(u_g,v_h))|\ge s+t-1.$\\

\noindent Case 3: $i\ne g$ and $j\ne h$. We may assume that $i<g$. Hence we have one of the following situations.
\begin{itemize}
\item If $j<h$, then we notice that at most two vertices of the set
$(\{u_1,\ldots,u_{g}\}\times \{v_j\})\cup (\{u_{i},\ldots,u_s\}\times \{v_h\})\cup (\{u_i\}\times \{v_1,\ldots,v_{h}\})\cup (\{u_g\}\times \{v_{j},\ldots,v_t\})$ do not belong to the set ${\cal D}_G((u_i,v_j),(u_g,v_h))$ (see Figure \ref{fig-Case-1} for an example with $i=3$, $j=2$, $g=9$ and $h=6$).

\begin{figure}[t]
\centering
\begin{tikzpicture}[scale=.3]
  \foreach \x in {0,2,4,6,8,10,12,14,16,18,20,22}
  {
    \foreach \y in {0,2,4,6,8,10,12,14}
    {
      \node at (\x,\y) [draw, shape=circle,scale=.5] {};
    }
  }
  \foreach \y in {2,4,6,8,10,12,14}
    {
      \node at (16,\y) [draw, fill=black, shape=circle,scale=.5] {};
    }
    \foreach \y in {0,2,4,6,8,10}
    {
      \node at (4,\y) [draw, fill=black, shape=circle,scale=.5] {};
    }
    \foreach \x in {0,2,4,6,8,10,12,14,16}
    {
     \node at (\x,2) [draw, shape=circle,fill=black, scale=.5] {};
    }
    \foreach \x in {4,6,8,10,12,14,16,18,20,22}
    {
     \node at (\x,10) [draw, shape=circle,fill=black, scale=.5] {};
    }
  \foreach \x in {1,2,3,4,5,6}
  {
   \node at (\x+\x-2,-2) {$u_{\x}$};
  }
  \foreach \x in {7,8,9,10,11,12}
  {
   \node at (\x+\x-2,-2) {$u_{\x}$};
  }
  \foreach \y in {1,2,3,4,5,6,7,8}
  {
   \node at (-2,\y+\y-2) {$v_{\y}$};
  }
\node at (14,2) [rectangle,fill=gray, scale=.85] {};
\node at (6,10) [rectangle, fill=gray,scale=.85] {};
\node at (4,2) [rectangle, fill=black,scale=.85] {};
\node at (16,10) [rectangle, fill=black,scale=.85] {};

\end{tikzpicture}
\caption{A sketch of the graph $P_{12}\boxprod P_8$ (edges have not been drawn). Square bolded vertices have the same distance to the square grey vertices, but different distances to the circular bolded vertices.}\label{fig-Case-1}
\end{figure}
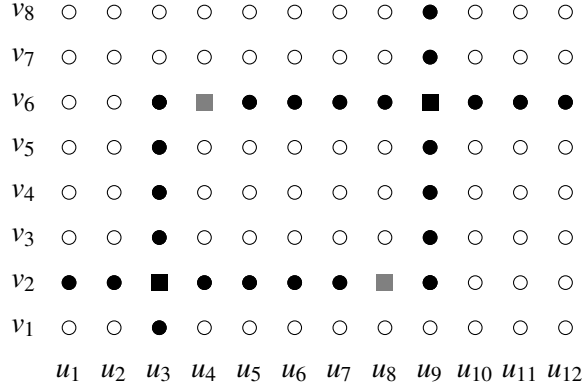

Consequently,
\begin{align*}
|{\cal D}_G((u_i,v_j),(u_g,v_h))|&\ge |\{u_1,\ldots,u_{g}\}\times \{v_j\}|+|\{u_{i},\ldots,u_s\}\times \{v_h\}|+\\
&\hspace*{0.5cm}+|\{u_i\}\times \{v_1,\ldots,v_{h}\}|+|\{u_g\}\times \{v_{j},\ldots,v_t\}|-6\\ 
& \ge g+s-i+1+h+t-j+1-6\\ 
&= s+t+g+h-i-j-4\\ 
&\ge s+t+i+j-i-j-2 \quad \mbox{(since $i<g$ and $j<h$)} \\
&=s+t-2.
\end{align*}
\item If $j>h$, then a similar procedure yields $|{\cal D}_G((u_i,v_j),(u_g,v_h))|\ge s+t-2$.
\end{itemize}
As a consequence, we obtain that $G$ is $(s+t-2)$-metric dimensional.
\end{proof}

Having established that $G=P_s\boxprod P_t$ is $(s+t-2)$-metric dimensional, next we obtain its $k$-metric dimension for every $k\in \{1,\ldots,s+t-2\}$.
As an observation regarding the set of distinctive vertices of a pair of vertices $x,y$, we notice that if $S$ is a $k$-resolving set for a graph $G$, then $|{\cal D}(x,y)\cap S|\ge k$. This simple fact will be used frequently in our next proof.

\begin{theorem}
For any grid graph $G=P_s\boxprod P_t$ and every $k\in \{1,\ldots,s+t-2\}$,
$$\dim_k(G)=2k.$$
\end{theorem}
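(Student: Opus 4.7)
The plan is to prove the two inequalities $\dim_k(G)\ge 2k$ and $\dim_k(G)\le 2k$ separately.

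\textbf{Lower bound.} I would apply the observation that any $k$-resolving set $S$ satisfies $|{\cal D}(x,y)\cap S|\ge k$ to four specific ``diagonal corner'' pairs,
\[ p_{NW}=\{(u_1,v_2),(u_2,v_1)\},\quad p_{NE}=\{(u_1,v_{t-1}),(u_2,v_t)\}, \]
\[ p_{SW}=\{(u_{s-1},v_1),(u_s,v_2)\},\quad p_{SE}=\{(u_{s-1},v_t),(u_s,v_{t-1})\}. \]
The proof of the previous theorem already displays ${\cal D}(p_{NW})$ as an ``L-shape'' of size exactly $s+t-2$ sitting on the top row and leftmost column minus the NW corner; by the symmetries of the grid the other three distinctive sets are the analogous L-shapes at the remaining three corners, each of size $s+t-2$ and each contained in the boundary of $G$. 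The crucial combinatorial observation is that every vertex on the boundary of $G$ lies in exactly two of these four L-shapes, while every interior vertex lies in none: a non-corner boundary vertex sits on exactly one side of the grid, which is part of exactly two of the four L-shapes; and each corner vertex is excluded from its own L-shape but lies in precisely two of the others. Summing over the four pairs and double-counting then yields
\[ 4k\;\le\;\sum_{p}|S\cap{\cal D}(p)|\;=\;\sum_{v\in S}\bigl|\{p: v\in{\cal D}(p)\}\bigr|\;\le\;2|S|, \]
so $|S|\ge 2k$.

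\textbf{Upper bound.} I would exhibit an explicit $k$-resolving set $S$ of size $2k$. Taking $s\ge t$ without loss of generality, a natural candidate for $k\le s$ is $S=\{(u_i,v_1),(u_i,v_t):1\le i\le k\}$, the first $k$ rows of the leftmost and rightmost columns. For $s<k\le s+t-2$ one extends this to
\[ S=(U\times\{v_1,v_t\})\cup\{(u_1,v_j),(u_s,v_j):2\le j\le k-s+1\}, \]
which still has cardinality $2k$ and is a subset of the boundary. In either case $|S\cap{\cal D}(p)|=k$ for each of the four corner pairs, saturating the lower-bound inequalities. It then remains to verify $|S\cap{\cal D}(x,y)|\ge k$ for every other pair $(x,y)$, which I would do by a case analysis mirroring Cases 1--3 of the previous theorem: non-diagonal pairs have distinctive sets of size $\ge s+t-1$ and so miss at most a small portion of $S$; non-corner diagonal pairs have distinctive sets whose sizes $ij+(s-i)(t-j)$ or $i(t-j)+(s-i)j$ strictly exceed $s+t-2$, leaving enough slack for the constraint.

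The lower-bound counting argument is conceptually clean once the L-shape correspondence is spotted. The bulk of the technical work lies in the upper-bound verification, where one must check the $k$-resolving property across every geometric type of pair, and where the construction itself must be adapted according to whether $k\le s$ or $k>s$; this case analysis is the main obstacle.
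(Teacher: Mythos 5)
Your lower bound is correct and takes a genuinely different---and cleaner---route than the paper's. The paper uses only the two pairs you call $p_{NW}$ and $p_{SE}$: their L-shaped distinctive sets meet in the two vertices $(u_s,v_1)$ and $(u_1,v_t)$, so inclusion--exclusion gives only $|S'|\ge 2k-2$, and the paper must then separately eliminate the cases $|S'|=2k-2$ and $|S'|=2k-1$ by bringing in the further pairs $\{(u_1,v_1),(u_2,v_2)\}$ and $\{(u_{s-1},v_{t-1}),(u_s,v_t)\}$; this elimination is the most delicate part of its argument. Your four-pair double count avoids that step entirely: each L-shape lies on the boundary, every vertex of $G$ (corners included) lies in at most two of the four, and $4k\le\sum_{p}|S\cap\mathcal{D}(p)|=\sum_{v\in S}|\{p:v\in\mathcal{D}(p)\}|\le 2|S|$ gives $|S|\ge 2k$ in one stroke. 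The only caveat is the degenerate grid $P_2\boxprod P_2$, where $p_{NE}=p_{SW}$ and $p_{NW}=p_{SE}$; there the two surviving distinctive sets are disjoint and the bound is immediate anyway. This is a real improvement in transparency over the published argument.

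The upper bound, however, has a genuine gap. You pick the same witness sets as the paper, but the verification that they are $k$-resolving is where essentially all of the work lies, and the justification you offer in its place does not stand up: knowing that $|\mathcal{D}(x,y)|\ge s+t-1$ for non-diagonal pairs, or that a diagonal pair has a distinctive set of size $i(t-j)+(s-i)j>s+t-2$, tells you nothing by itself about $|\mathcal{D}(x,y)\cap S|$, since $S$ has only $2k$ of the $st$ vertices and a large distinctive set could in principle meet it in far fewer than $k$ points. (For same-row or same-column pairs the non-distinguishing vertices happen to form a single row or column, so a size argument can be salvaged; for diagonal pairs the non-distinguishing set is a union of two quadrant-shaped blocks that can swallow large portions of the boundary, and one genuinely has to track positions.) The paper's proof does exactly this positional bookkeeping: for instance, when $i<g$, $j\le h$ and $i<k\le g$ it shows that at most one vertex of $\{u_{i+1},\ldots,u_k\}\times\{v_1\}$ fails to distinguish the pair while at least one vertex of $\{u_1,\ldots,u_k\}\times\{v_t\}$ succeeds, recovering the count $k$. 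The range $s<k\le s+t-2$ is more delicate still: the paper combines the $s$-resolving property of $U\times\{v_1,v_t\}$ with the $(\alpha+1)$-resolving property of the added column segments, corrects for their two common vertices to get $k-1$, and then rules out the deficiency-one configuration by a separate argument. None of this is supplied by, or follows from, the slack heuristic in your sketch, so as written the inequality $\dim_k(G)\le 2k$ remains unproved.
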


\begin{proof}
If $k=1$, then it is already known (see \cite{Chartrand00}) that $\dim_1(G)=\dim(G)=2$, so from now on we consider only $k\ge 2$.  Without loss of generality, we suppose that $s\ge t$.  Let $(u_i,v_j)$ and $(u_g,v_h)$ be two distinct vertices of $G$. Next we consider the following three cases according to the value of $k$.\\

\noindent Case 1: $k\le s$. Let $S=\{u_1,\dots,u_k\}\times \{v_1,v_t\}$. We consider the following subcases.\\

\noindent Subcase 1.1: $i=g$. Hence $j\ne h$, and for any vertex $(u_p,v_q)\in S$ it follows $d_{G}((u_i,v_j),(u_p,v_q))\ne d_{G}((u_g,v_h),(u_p,v_q))$. Thus, $(u_i,v_j)$ and $(u_g,v_h)$ are distinguished by $2k$ vertices of $S$.\\

\noindent Subcase 1.2: $i\ne g$. Without loss of generality we assume that $i<g$. Moreover, we consider $j\le h$. First notice that ${\cal D}_G((u_i,v_j),(u_g,v_h))\supseteq \{u_1,\dots,u_i\}\times \{v_1\}$. If $i\ge k$, then clearly $|{\cal D}_G((u_i,v_j),(u_g,v_h))\cap S|\ge k$. Hence, we may assume $i<k$. If $g\ge k$, then there is at most one vertex $(u_f,v_1)\in \{u_{i+1},\dots,u_k\}\times \{v_1\}$ such that $d_{G}((u_i,v_j),(u_f,v_1))= d_{G}((u_g,v_h),(u_f,v_1))$, which leads to
$$|{\cal D}_G((u_i,v_j),(u_g,v_h))\cap (\{u_{1},\dots,u_k\}\times \{v_1\})|\ge k-1.$$
On the other hand, since $i<k$, there is at least one vertex $(u_d,v_t)\in \{u_1,\dots,u_k\}\times \{v_t\}$ such that $d_{G}((u_i,v_j),(u_d,v_t))\ne d_{G}((u_g,v_h),(u_d,v_t))$. As a consequence,
\begin{align*}
  |{\cal D}_G((u_i,v_j),(u_g,v_h))\cap S|& = |{\cal D}_G((u_i,v_j),(u_g,v_h))\cap (\{u_{1},\dots,u_k\}\times \{v_1\})| \\
   & \hspace*{0.5cm} + |{\cal D}_G((u_i,v_j),(u_g,v_h))\cap (\{u_{1},\dots,u_k\}\times \{v_t\})| \\
   & \ge k.
\end{align*}
We now consider $g < k$. Thus, ${\cal D}_G((u_i,v_j),(u_g,v_h))\supseteq \{u_g,\dots,u_k\}\times \{v_t\}$ and at most one vertex
in $\{u_1,\dots,u_g\}\times \{v_1\}$ does not belong to ${\cal D}_G((u_i,v_j),(u_g,v_h))$.
Moreover, ${\cal D}_G((u_i,v_j),(u_g,v_h))$ contains at least one vertex in $\{u_i,\dots,u_g\}\times \{v_t\}$.
As a consequence,
\begin{align*}
  |{\cal D}_G((u_i,v_j),(u_g,v_h))\cap S|& \ge |{\cal D}_G((u_i,v_j),(u_g,v_h))\cap (\{u_1,\dots,u_g\}\times \{v_1\})| \\
   & \hspace*{0.5cm} + |{\cal D}_G((u_i,v_j),(u_g,v_h))\cap (\{u_i,\dots,u_g\}\times \{v_t\}| \\
	 & \hspace*{1cm} + |\{u_g,\dots,u_k\}\times \{v_t\}| \\
   & \ge g-1+1 + k-g+1 \\
   & > k.
\end{align*}
See Figure \ref{fig-Case-1-2} for examples of the situations above considering $k=7$.

\begin{figure}[!htb]
\centering
\begin{tikzpicture}[scale=.3]
  \draw[rounded corners, fill=lightgray] (-1.1,-1.1) rectangle (11.1,1.1);
  \draw[rounded corners, fill=lightgray] (-1.1, 12.9) rectangle (7.1,15.1);
  \draw[rounded corners, fill=lightgray] (8.9, 12.9) rectangle (13.1,15.1);
  \draw[rounded corners, pattern=north east lines, pattern color=gray] (-0.8,-0.8) rectangle (2.8,0.8);
  \draw[rounded corners, pattern=north east lines, pattern color=gray] (3.2,13.2) rectangle (12.8,14.8);
  \draw[rounded corners, pattern=north west lines, pattern color=gray] (-0.8,-0.8) rectangle (12.8,0.8);

  \foreach \x in {0,2,4,6,8,10,12,14,16,18,20,22}
  {
    \foreach \y in {0,2,4,6,8,10,12,14}
    {
      \node at (\x,\y) [draw, shape=circle,scale=.5] {};
    }
  }
    \foreach \x in {0,2,4,6,8,10,12}
    {
     \node at (\x,0) [draw, shape=circle,fill=gray, scale=.6] {};
     \node at (\x,14) [draw, shape=circle,fill=gray, scale=.6] {};
    }

  \node at (2,4) [rectangle,draw, fill=gray, scale=1] {};
  \node at (14,4) [rectangle,draw, pattern=north west lines, scale=1] {};
  \node at (18,6) [rectangle,draw, pattern=north west lines, scale=1] {};
  \node at (6,10) [rectangle,draw, pattern=north east lines, scale=1] {};
  \node at (2,6) [rectangle,draw, pattern=north east lines, scale=1] {};
  \node at (18,8) [rectangle,draw,fill=gray, scale=1] {};

  \foreach \x in {1,2,3,4,5,6}
  {
   \node at (\x+\x-2,-2) {$u_{\x}$};
  }
  \foreach \x in {7,8,9,10,11,12}
  {
   \node at (\x+\x-2,-2) {$u_{\x}$};
  }
  \foreach \y in {1,2,3,4,5,6,7,8}
  {
   \node at (-2,\y+\y-2) {$v_{\y}$};
  }
\end{tikzpicture}
\caption{A sketch of the graph $P_{12}\Box P_8$ (edges have not been drawn). Each pair of square vertices with identically filled shapes is recognized by vertices in the rounded rectangles with identically filled areas, respectively. }\label{fig-Case-1-2}
\end{figure}
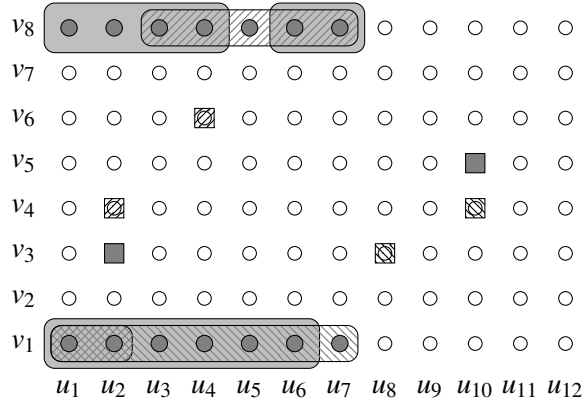

Finally, if $j>h$, then a similar procedure gives an analogous result and we observe that $S$ is a $k$-resolving set for $G$.\\

\noindent Case 2: $s<k\le s+t-2$. Note that $s\ge 3$, since the case $s=2$ would lead to the graph $C_4$, which is $2$-metric dimensional. Assume that $k=s+\alpha$ for some integer $\alpha\ge 1$. Let $S=(U\times\{v_1,v_t\})\cup (\{u_1,u_s\}\times\{v_2,\ldots,v_{\alpha+1}\})$. According to Case 1, the set $S_1= U\times\{v_1,v_t\}\subset S$ is a $s$-resolving set of $G$, which means that any pair of vertices of $G$ is recognized by at least $s$ vertices of $S_1$. On the other hand, also by Case 1 (in a similar version), the set $S_2=\{u_1,u_s\}\times\{v_2,\ldots,v_{\alpha+1}\}\subset S$ is an $(\alpha+1)$-resolving set of $G$, which similarly means any pair of vertices of $G$ is recognized by at least $\alpha+1$ vertices of $S_2$. Since only two vertices, \emph{i.e.} $(u_1,v_1)$ and $(u_s,v_1)$, belong to both sets $S_1$ and $S_2$, it must happen that at least $s+\alpha+1-2$ vertices of $S$ recognize each pair of vertices of $G$. Suppose that there is a pair of vertices  $(u_i,v_j)$ and $(u_g,v_h)$ such that they are distinguished by exactly $s+\alpha-1$ vertices of $S$. Thus, they are distinguished by $(u_1,v_1)$ and $(u_s,v_1)$, by $s-2$ vertices in $S_1\setminus \{(u_1,v_1),(u_s,v_1)\}$ and by $\alpha-1$ vertices in $S_2\setminus \{(u_1,v_1),(u_s,v_1)\}$. However, we can now notice that if $(u_i,v_j)$ and $(u_g,v_h)$ are distinguished by $(u_1,v_1)$ and $(u_s,v_1)$, then $(u_i,v_j)$ and $(u_g,v_h)$ are not distinguished by at most one vertex in $U\times\{v_1\}$ and by at most one vertex in $U\times\{v_t\}$, that is, $2s-2>s$ (since $s\ge 3$) vertices of $S$, which is a contradiction. Thus, each pair of vertices of $G$ is distinguished by at least $s+\alpha=k$ vertices of $S$. Therefore, $S$ is a $k$-resolving set of $G$. As a consequence of both cases, we obtain that $\dim_k(G)\le 2k$.

We now want to show that no smaller $k$-resolving set can exist.  Let $S'$ be a \mbox{$k$-metric} basis for $G$. We consider the vertices $(u_1,v_2)$, $(u_2,v_1)$, $(u_s,v_{t-1})$ and $(u_{s-1},v_t)$. Since
$$|A|=|{\cal D}_G((u_1,v_2),(u_2,v_1))\cap S'|=|(U\times \{v_1\})\cup (\{u_1\}\times V)-\{(u_1,v_1)\}\cap S'|\ge k$$
and
$$|B|=|{\cal D}_G((u_s,v_{t-1}),(u_{s-1},v_t))\cap S'|=|(U\times \{v_t\})\cup (\{u_s\}\times V)-\{(u_s,v_t)\}\cap S'|\ge k$$
we have that
$$|S'|\ge |A|+|B|-2\ge 2k-2.$$
Now suppose $|S'|=2k-2$. Thus, it follows $|A|=k$, $|B|=k$, $(u_s,v_1),(u_1,v_t)\in S$ and $S=A\cup B$. So $|(\{u_s\}\times \{v_2,\ldots,v_{t-1}\})\cup (\{u_2,\ldots,u_{s-1}\}\times \{v_t\})\cap S'|=k-1$ and $|(\{u_1\}\times \{v_2,\ldots,v_{t-1}\})\cup (\{u_2,\ldots,u_{s-1}\}\times \{v_1\})\cap S'|=k-1$. We now consider the vertices $(u_1,v_1)$, $(u_2,v_2)$, $(u_{s-1},v_{t-1})$ and $(u_s,v_t)$.
Hence, if we denote $Q={\cal D}_G((u_1,v_1),(u_2,v_2))$, then
\begin{align*}
|Q\cap S'|&\le |((\{u_s\}\times \{v_2,\ldots,v_{t-1}\})\cup (\{u_2,\ldots,u_{s-1}\}\times \{v_t\}))\cap S'|\\
&= k-1
\end{align*}
and
\begin{align*}
|Q\cap S'|&\le |(\{u_1\}\times \{v_2,\ldots,v_{t-1}\})\cup (\{u_2,\ldots,u_{s-1}\}\times \{v_1\})\cap S'|\\
&= k-1,
\end{align*}
which is a contradiction. So $|S'|\ge 2k-1$. If we suppose that $|S'|=2k-1$, then an analogous procedure to the one above gives a contradiction again. Therefore, we have that $|S'|\ge 2k$ and the proof is complete.
\end{proof}

\subsection{Decoding}

For the grid graphs $G=P_s \boxprod P_t$ described above, and for any $k\leq s+t-2$, we obtain $(G,k)$-codes with $n=st$ codewords of length $\ell=2k$ over an alphabet of size $s+t+1$.  Such a $(G,k)$-code has correction capability $r=\left\lfloor (k-1)/2 \right\rfloor$.  To decode $r'<r$ errors using the algorithm described above will require a $(2k,2k-r,r')$-uncovering, or equivalently a $(2k,r,r')$-covering design.  Depending on the parity of $k$, we therefore require either: (i) if $k=2m$ is even, a $(4m,m-1,r')$-covering design; or (ii) if $k=2m+1$ is odd, a $(4m+2,m,r')$-covering design.  Proposition~\ref{prop:petrov} provides the coverings (and thus uncoverings) we require.  Given that these uncoverings have constant size, the complexity of the decoding algorithm in this case is $O(kn)$.

\section{Conclusion}

The main achievement of this paper was to obtain a new application of $k$-resolving sets in graphs to coding theory, by obtaining a new method of constructing error-correcting codes.  We considered three families of graphs, namely paths, cycles and grid graphs; naturally, there are many more graph families which could be investigated with this application in mind.

We conclude by mentioning that, while $1$-resolving sets have many applications, for the application of $k$-resolving sets to error-correcting codes we require that $k\geq 3$; however, $2$-resolving sets could potentially be applied to the detection of errors.

\end{document}